\theoremstyle{plain}
\newtheorem*{theorem*}{Theorem}
\newtheorem{thm}{Theorem}[section]
\newtheorem{theorem}[thm]{Theorem}
\newtheorem{lemma}[thm]{Lemma}
\newtheorem*{proposition*}{Proposition}
\newtheorem{conjecture}[thm]{Conjecture}
\newtheorem{example}[thm]{Example}
\newtheorem{claim}[thm]{Claim}
\newtheorem*{claim*}{Claim}
\theoremstyle{remark}
\theoremstyle{remark}
\newcommand{\N}{\mathbb{N}}
\newcommand{\cF}{\mathcal F}
\newcommand{\cG}{\mathcal G}
\newcommand{\cM}{\mathcal M}
\newcommand{\cT}{\mathcal T}
\DeclareMathAlphabet{\mathpzc}{OT1}{pzc}{m}{it}
\def\1{\mathbf{1}} 
\def\0{\mathbf{0}}
\let\latexchi\chi
\renewcommand\chi{\@ifnextchar_\sub@chi\latexchi}
\newcommand{\sub@chi}[2]{% #1 is _, #2 is the subscript
  \@ifnextchar^{\subsup@chi{#2}}{\latexchi^{}_{#2}}%
}
\newcommand{\subsup@chi}[3]{% #1 is the subscript, #2 is ^, #3 is the superscript
  \latexchi_{#1}^{#3}%
}
\DeclareMathOperator{\cl}{cl}
\def\1{\mathbf{1}}                       % equality
\date{}
\begin{document} 

\title{The Implicit Graph Conjecture is False}

\author{ Hamed Hatami \thanks{School of Computer Science, McGill University. \texttt{hatami@cs.mcgill.ca}. Supported by an NSERC grant.}
\and Pooya Hatami \thanks{Department of Computer Science and Engineering, The Ohio State University. \texttt{pooyahat@gmail.com}. Supported by NSF grant CCF-1947546}}

%Comments

\newcommand{\hamed}[1]{\todo[size=\tiny, color=lightgray]{hh: #1}}
\newcommand{\pooya}[1]{\todo[size=\tiny, color=yellow]{ph: #1}}
\newcommand{\blue}[1]{\textcolor{blue}{#1}}

\maketitle

\begin{abstract}
An efficient implicit representation of an $n$-vertex graph $G$ in a family $\cF$ of graphs  assigns to each vertex  of $G$ a binary code of length $O(\log n)$ so that the adjacency between every pair of vertices can be determined only as a function of their codes. This function can depend on the family but not on the individual graph. Every family of graphs admitting such a representation contains at most $2^{O(n\log(n))}$ graphs on $n$ vertices, and thus has at most factorial speed of growth.  

The Implicit Graph Conjecture states that, conversely, every hereditary graph family with at most factorial speed of growth admits an efficient implicit representation. We refute this conjecture by establishing the existence of hereditary graph families with factorial speed of growth that require codes of length $n^{\Omega(1)}$. 
\end{abstract}

%%%%%%%%%%%%%%%%%%%%%%%%%%%%%%%%%%%%%%SECTION INTRODUCTION
%%%%%%%%%%%%%%%%%%%%%%%%%%%%%%%%%%%%%%%%%%%%%%%%%%%%%%%%%%
%%%%%%%%%%%%%%%%%%%%%%%%%%%%%%%%%%%%%%%%%%%%%%%%%%%%%%%%%%

\section{Introduction}

In the context of efficient representations of graphs, the concept of implicit representations  was formally defined by M\"uller \cite{Muller}~and Kannan, Naor, and Rudich~\cite{KannanSTOC,MR1186827}. Let $\cF$ be a family of (labeled) finite graphs, and let $\cF_n$ denote the $n$-vertex graphs in $\cF$ with vertex set $[n]\coloneqq\{1,\ldots,n\}$. For a function $w:\N \rightarrow \N$, an \emph{implicit representation} with code-length $w(n)$ for an $n$-vertex graph $G$ in $\cF$ is an assignment of a $w(n)$-bit code to each vertex of $G$, together with a \emph{decoder} function that takes  two vertex-codes and  returns whether or not they are adjacent in $G$. Here, the decoder function may depend on $\cF$, but it is independent of the individual graph $G$.

Note that even the family of all graphs can be represented using codes of length $n$, since every vertex can be labeled with the corresponding row of the adjacency matrix.  We call an implicit representation \emph{efficient} if its code-length is $w(n)=O(\log n)$.  The literature often refers to efficient implicit representations  simply as implicit representations. However, in this article, we are interested in obtaining lower bounds on the code-length, and as a result, we have chosen to allow implicit representations to have arbitrary code-lengths. 

Moreover, the original definitions of \cite{Muller,MR1186827} require that the decoder function is computable by a polynomial-time algorithm. However,  as these computational matters are irrelevant to our lower bound, we do not require them here.

\begin{example}\label{ex:forests}
Let $\cT$ be the family of all forests. Given any $G\in \cT_n$, we can turn each connected component of $G$ to a rooted tree by choosing an arbitrary root, and assign to every vertex of $G$, the index of its parent (if it exists). Note that the adjacency of a pair of vertices can be decided given their $\lceil \log n\rceil$-length codes, and thus $\cT$ has an efficient implicit representation.
\end{example}

A sequence $(U_n)_{n\in \N}$ of graphs is said to be a sequence of \emph{universal graphs} for a graph family $\cF$, if for every $n$ and $F\in \cF_n$, there exists a graph embedding $\pi_F:V(F)\to V(U_n)$. The family $\cF$ is said to be representable by polynomial-size \emph{universal graphs} if these universal graphs satisfy  $|V(U_n)|=n^{O(1)}$. Universal graphs were initially defined with the purpose of representation of the family of all graphs~\cite{MR172268, MR177911}, and were extensively studied for various families of graphs~\cite{MR505812, MR556017, MR806964, wu1985embedding, MR990447, Muller,MR1186827, MR611926, MR3899158, bonamy2021optimal}. Efficient implicit representations are equivalent to polynomial-size universal graphs~\cite{Muller,MR1186827}. To see the equivalence, note that given an efficient implicit representation, one can take $V(U_n)$ to be the set of  all $O(\log(n))$-bits codes and define the edges of $U_n$ according to the decoder function. For the converse direction, one can assign a unique $O(\log(n))$-bit code to each of the $n^{O(1)}$ vertices of $U_n$, and use the functions $\pi_F$ to assign the codes of the vertices of each $F \in \cF_n$ accordingly. 

A simple counting argument shows that families of graphs with efficient implicit representations cannot be very large. Indeed, if $\cF$ has an efficient implicit representation, then every graph in $\cF_n$ can be described using $O(n \log n)$ bits, and thus we must have $|\cF_n|\leq 2^{O(n \log n)}$. In the terminology of~\cite{MR1490438,MR1769217} such families are said to have at most \emph{factorial} speed of growth. 

A graph family $\cF$ is called \emph{hereditary} if it is closed under taking \emph{induced} subgraphs. More precisely, for every $m \le n$,  injection $\sigma:[m]\to [n]$, and  $G \in \cF_n$, the graph $F$ with the vertex set $[m]$ and the edge set $\{ij:\sigma(i)\sigma(j) \in E(G)\}$  belongs to $\cF$.  Note that taking $m=n$ in this definition  ensures that $\cF$ is closed under graph isomorphism. The  \emph{hereditary closure} of a family $\cF$, denoted by $\cl(\cF)$, is the set of all induced subraphs of every $G \in \cF$. It is the smallest hereditary family that contains $\cF$.

Next, we discuss why in the study of implicit representations, it is more natural to consider hereditary graph families. Let $\cF$ be  representable by the sequence of polynomial-size universal graphs $U_n$. Let $\epsilon>0$ be a fixed constant, and let $U'_n$ be the disjoint union  $U'_n=U_1 \cup \ldots \cup U_n$. Let $\cF_\epsilon \subseteq \cl(\cF)$ be obtained from $\cF$ by including all induced subgraphs of size at least $n^\epsilon$ of every $G \in \cF_n$ for every $n$. Since the size of $U'_n$ is polynomial in $n^\epsilon$, the family $\cF_\epsilon$ is representable by the universal graphs $U'_n$. This observation shows that for $\cF$ to have an efficient implicit representation, not only $\cF$ must have at most factorial speed of growth, but more generally, the families $\cF_\epsilon$ must also have at most factorial speed of growth.  

\begin{example}(See also~\cite{MR1971502})
Let $\cF$ be the set of all graphs with at least $n - \sqrt{n}$ isolated vertices. Note that the speed of $\cF$ is at most ${n \choose \sqrt{n}}2^{{\sqrt{n} \choose 2}} =2^{O(n)}$, which is subfactorial. On the other hand, $\cF_{1/2}$ contains all graphs and thus has the super-factorial growth speed of $2^{\Theta(n^2)}$. Hence $\cF$ does not admit an efficient implicit representation despite having at most factorial growth speed.  
\end{example}

Note that $\cF_\epsilon \subseteq \cl(\cF)$.  
In light of these observations, the \emph{implicit graph conjecture} speculates that if $\cl(\cF)$  has at most factorial speed of growth, then $\cF$ admits an efficient implicit representation. 

\begin{conjecture}[Implicit Graph Conjecture \cite{KannanSTOC,MR1186827}, \cite{MR1971502}]
\label{conj:IGC}
Every hereditary graph family of at most factorial speed has an efficient implicit representation. 
\end{conjecture}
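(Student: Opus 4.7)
The plan is to refute the conjecture by exhibiting a hereditary graph family $\cF$ with factorial speed of growth that requires codes of length $n^{\Omega(1)}$. Note at the outset that the factorial bound $|\cF_n|\leq 2^{O(n\log n)}$ already forces only $O(\log n)$ bits of information per vertex, so any $n^{\Omega(1)}$ lower bound on code length must be structural—it reflects the inability of any polynomial-size universal graph to accommodate every member of $\cF_n$, rather than an information-theoretic shortage.

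The plan combines two ingredients. First, a counting/pigeonhole argument yields the weaker statement that $w(n)=\omega(\log n)$ for some factorial family: the number of distinct hereditary families obtained as closures $\cl(\cG)$ of subsets $\cG\subseteq\mathrm{Graphs}_n$ of size at most $2^{cn\log n}$ is at least $\binom{2^{\binom{n}{2}}}{2^{cn\log n}}=2^{2^{\Theta(n\log n)}}$, since $\cl(\cG)_n=\cG$ (no $n$-vertex graph is a proper induced subgraph of another $n$-vertex graph); on the other hand, each family admitting codes of length $w(n)$ is contained in the induced subgraphs of some universal graph on $N=2^{w(n)}$ vertices, of which there are only $2^{\binom{N}{2}}=2^{O(N^2)}$, each giving at most $2^{\binom{N}{n}}\leq 2^{N^n}$ subfamilies, so the total is at most $2^{2^{O(n\,w(n))}}$. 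Pigeonholing produces, for each constant $c$, a factorial family forcing $w(n)\gtrsim c\log n$. Second, to push this to the polynomial bound, the construction must be strengthened with a structural rigidity property: I would build $\cF$ from graphs (perhaps pseudo-random, algebraic, or derived from communication problems with large discrepancy) whose adjacency patterns are mutually incompatible in the sense that no polynomially sized host graph can realize enough of them as induced subgraphs.

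The two main obstacles are as follows. Ensuring factorial growth at every level $m\leq n$ is delicate: a generic hereditary closure $\cl(\cG)$ blows up at small $m$ because $|\cl(\cG)_m|\leq |\cG|\cdot\binom{n}{m}$ can easily exceed $2^{Cm\log m}$, so the base graphs must be chosen to have controlled small-induced-subgraph variety, or the construction must be stratified across widely separated scales $n_1\ll n_2\ll\cdots$ with a careful level-by-level budget. More seriously, bridging from the $\omega(\log n)$ counting bound to the polynomial $n^{\Omega(1)}$ bound is the essential difficulty: it requires a genuinely structural argument bounding the number of induced subgraphs that a single polynomial-size graph can simultaneously realize from a rich, rigid family—this is the step where I expect all the combinatorial innovation of the paper to lie.
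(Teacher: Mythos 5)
Your high-level plan is on the right track---a probabilistic/counting construction is indeed what the paper does---but the proposal leaves both obstacles you identify genuinely unresolved, and the key technical idea that resolves them simultaneously is missing. Moreover, even your warm-up pigeonhole argument for the weaker $\omega(\log n)$ bound does not go through as stated: if you draw $\cG$ as an arbitrary collection of $2^{cn\log n}$ graphs on $n$ vertices, then $\cl(\cG)$ will typically contain \emph{all} graphs on $m$ vertices for $m$ up to roughly $\log n$, and more generally $|\cl(\cG)_m|$ can be as large as $2^{\Theta(m^2)}$ for polynomially small $m$. So those closures are not factorial, and the pigeonhole produces no counterexample at all. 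The bound $|\cl(\cG)_m| \le |\cG|\cdot\binom{n}{m}$ you mention does not save you: that quantity is $2^{cn\log n + O(m\log n)}$, far above $2^{O(m\log m)}$ for $m \ll n$.

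The missing idea is to build the base families from \emph{sparse} random graphs. The paper takes $\cM_m$ to be a random collection of $\lceil 2^{\sqrt m}\rceil$ bipartite graphs, each uniformly random in $B(\tfrac m2, \lfloor(\tfrac m2)^{2-\epsilon}\rfloor)$ for $\epsilon = \tfrac12 + \tfrac{\delta}{2}$. Sparsity does two jobs at once. First, it controls the closure: with high probability, every induced subgraph on at most $m^{1/2}$ vertices of such a random sparse bipartite graph is $O_\delta(1)$-degenerate (this is \cref{lem:lemDeg}), and since there are only $2^{O(n\log n)}$ $O(1)$-degenerate $n$-vertex graphs (\cref{lemma:degencount}), the contribution from ancestors $m \gg n^2$ is factorial; the contribution from $m \le 2n^2$ is handled by your own crude bound $\sum_m m^n |\cM_m|$, which is fine because $|\cM_m|\le 2^{\sqrt m}$ is small. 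This is \cref{claim:factorialspeed}. Second, sparsity still leaves ample entropy: there are $\binom{n^2/4}{\lfloor (n/2)^{2-\epsilon}\rfloor} = 2^{\Omega(n^{2-\epsilon}\log n)}$ graphs to choose from, so the number of candidate families $\cM_n$ of size $k_n = \lceil 2^{\sqrt n}\rceil$ is $2^{\Omega(k_n n^{2-\epsilon}\log n)}$, which, for $u = 2^{n^{1/2-\delta}}$, overwhelms the $2^{u^2}\cdot u^{nk_n}$ collections a single $u$-vertex universal graph can host. No further ``structural rigidity'' beyond random sparsity is needed; the polynomial lower bound falls out of the same counting once the degeneracy lemma is available. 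In short, the innovation you correctly anticipate must be there is precisely the sparse-random-graph degeneracy lemma---not an algebraic or pseudorandom construction---and without it your proposal has not yet exhibited a single factorial family defeating $O(\log n)$ codes, let alone $n^{1/2-\delta}$.
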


\cref{conj:IGC} was posed as a question in~\cite{KannanSTOC,MR1186827}, but later, Spinrad stated it as a  conjecture in \cite{MR1971502}. The implicit graph conjecture is important because it would imply the existence of space-efficient representations for all small hereditary graph families. Moreover, assuming that the decoder is efficiently computable, answering edge queries would require only poly-logarithmic time in the size of the graph.

The implicit graph conjecture has seen much attention by both combinatorists and computer scientists~\cite{MR1186827, MR1971502, MR3613451, MR3490233, MR3899158}, and it has been verified for numerous  restricted classes of hereditary graph families~\cite{MR3702457, MR1696252, alstrup2002small, gavoille2007shorter, MR3279268, MR4232068, MR4262551, harms2021randomized,MR2995722}. 

In this paper, we disprove this three-decades-old conjecture via a  probabilistic argument. 

\begin{theorem}[Main Theorem]
\label{thm:main}
For every $\delta>0$,  there is a hereditary graph family $\cF$ that has at most factorial speed of growth, but every implicit representation of $\cF$ must use codes of length $\Omega(n^{\frac{1}{2}-\delta})$.
\end{theorem}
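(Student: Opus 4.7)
The strategy is a probabilistic construction exploiting the equivalence (noted in the introduction) between implicit representations of code-length $w(n)$ and universal graphs of size $2^{w(n)}$ that contain every graph in $\cF_n$ as an induced subgraph. For each $n$ in a sparse sequence $n_1<n_2<\cdots$, the plan is to populate $\cF_{n_k}$ with random samples from a structured class $\cG_{n_k}$ of $n_k$-vertex graphs, to set $\cF\defeq\cl\bigl(\bigcup_k \cF_{n_k}\bigr)$, and to verify simultaneously that (A) $\cF$ grows at most factorially and (B) for each $k$, no graph $U$ on at most $2^{n_k^{1/2-\delta}}$ vertices contains every member of $\cF_{n_k}$ as an induced subgraph.

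For (B), the argument is a union bound over universal graphs. For $N=2^w$ with $w<n^{1/2-\delta}$, there are at most $2^{\binom{N}{2}}\le 2^{2^{2w}}$ candidate $U$'s, and each admits at most $N^n$ labeled $n$-vertex induced subgraphs. If the labeled class $\cG_n$ has size at least $2^{\Theta(n^{3/2-\delta})}$, then a uniformly random element of $\cG_n$ embeds into a fixed $U$ with probability at most $N^n/|\cG_n|$; drawing $t=2^{\Omega(n^{1/2-\delta})}$ independent samples from $\cG_n$ then drives $2^{\binom{N}{2}}\cdot (N^n/|\cG_n|)^t$ below $1$. In particular $|\cF_n|=t$ sits far below the factorial bound, leaving plenty of room for (A).

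For (A), the class $\cG_n$ must be engineered so that induced subgraphs produce only factorially many iso classes at every scale. Concretely, $|\cl(\cF_{n_k})_m|\le 2^{O(m\log m)}$ must hold for every $m\le n_k$, and the sequence $\{n_k\}$ must grow sparsely enough (e.g., doubly exponentially) that contributions from different scales do not accumulate past factorial growth. This structural requirement is the principal obstacle: plain Erd\H{o}s--R\'enyi graphs $G(n,1/2)$ are ruled out because they already contain essentially every graph on $\Theta(\log n)$ vertices as an induced subgraph, producing $2^{\Omega(\log^2 n)}$ iso classes at that scale and violating the factorial bound. The heart of the proof is therefore to design distributions supported on graphs that are globally rich enough (supplying the $2^{n^{3/2-\delta}}$ labeled graphs needed in B) yet locally constrained enough (only $2^{O(m\log m)}$ iso classes of $m$-vertex induced subgraphs for every $m$). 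A promising route is a class of random bipartite graphs defined by sparse random algebraic rules, or random embeddings of a rigid template, so that combinatorial diversity is concentrated at scales close to $n$ rather than at small $m$.
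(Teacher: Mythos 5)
Your framing is right: the reduction to universal graphs, the union-bound/counting argument for the lower bound, and the recognition that the whole game is to build a class $\cG_n$ that is globally rich (so the counting goes through) yet locally poor (so the hereditary closure stays factorial) --- this is exactly the shape of the paper's proof. Your diagnosis of why $G(n,1/2)$ fails is also correct. The quantitative ballparks you cite (about $2^{n^{3/2-\delta}}$ labeled graphs, about $2^{\Theta(n^{1/2-\delta})}$ samples per level) are in the right regime.

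But you stop precisely at what the paper calls the ``key idea,'' and the routes you gesture at (``sparse random algebraic rules,'' ``random embeddings of a rigid template'') are not how it is done and it is not clear they could be made to work. The paper's resolution is elementary and specific: take $\cG_n$ to be uniformly random \emph{sparse} bipartite graphs, namely bipartite graphs on $n/2+n/2$ vertices with exactly $\lfloor (n/2)^{2-\epsilon}\rfloor$ edges for $\epsilon=\tfrac12+\tfrac\delta2$. The crucial lemma (Lemma~2.1) is that with high probability every induced subgraph of such a graph on at most $n^{\epsilon'}$ vertices (for $\epsilon'<\epsilon$) is $O(1)$-degenerate; this is proved by a first-moment calculation bounding the probability that a small induced subgraph has minimum degree $\geq c$. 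Degeneracy is what actually delivers your requirement ``only $2^{O(m\log m)}$ iso classes at every small scale $m$,'' via the easy count that there are at most $2^{O(cm\log m)}$ $c$-degenerate graphs on $m$ vertices (Lemma~2.2). Without naming some concrete local property of this kind (degeneracy, bounded arboricity, or similar) together with a proof that sparse random graphs have it at all polynomially-smaller scales, step (A) is unresolved, and that is the mathematical heart of the theorem. A secondary point: your sparse sequence $n_1<n_2<\cdots$ is unnecessary; the paper uses all even $n$ and handles the crossover regime directly by bounding $\sum_{m=n}^{2n^2} m^n|\cM_m|$ when $n>\sqrt{m/2}$, which is cleaner than managing a doubly-exponential lacunary sequence.
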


\cref{thm:main} not only refutes the existence of $O(\log(n))$-bit codes, but it shows that there are factorial hereditary graph families that require  code-lengths that are  polynomial in $n$. 

\paragraph{Proof overview.} First, note that the number of polynomial-size universal graphs is  $2^{n^{O(1)}}$, and  each such graph can represent at most $n^{O(n)}$ graphs of size $n$.  Since there are $2^{\Omega(n^2)}$ graphs on $n$ vertices, a simple counting argument shows that, with high probability,   a random  collection of $n^{\omega(1)}$  graphs on $n$ vertices cannot  be represented by any universal graph of size $n^{O(1)}$.  It is thus tempting to construct $\cF$ by randomly selecting $n^{\omega(1)}$ graphs for each $n$. Then, as desired, with high probability, $\cl(\cF)$  does not have a polynomial-size universal graph representation, but unfortunately, taking the hereditary closure  will expand the family to contain all graphs and thus $\cl(\cF)$ will have growth speed $2^{\Omega(n^2)}$. Indeed for every $n$, for sufficiently large $m$,  even a single random graph on $m$ vertices will, with high probability, contain all graphs on $n$ vertices as induced subgraphs.  

The key idea to overcome this issue is to consider  slightly sparser random graphs. We will select the initial graphs $\cF$ from the set of graphs that have $n^{2-\epsilon}$ number of edges for some $\epsilon>0$. There are still many such graphs; thus, the same counting argument shows that, with high probability, polynomial-size universal graphs cannot represent this family.  Regarding the growth speed of  $\cl(\cF)$, note that every $n$-vertex graph $F \in \cl(\cF)$ is an induced subgraph of some randomly selected  graph $G \in \cF_m$ where $m \ge n$.  If $m$ is much larger than $n$, then $F$ is a small subgraph of the random sparse graph $G$, and thus it has a simple structure. We will show that due to these structural properties, the number of such graphs is at most factorial.  On the other hand, if $m$ is not much larger than $n$,  there are only $n^m$ possible ways of choosing $F$ inside $G$, and thus the number of such graphs $F$ can  be upper-bounded  by the sum of $n^m|\cF_m|$ over $m=n^{O(1)}$.
Since also  $|\cF_m|$ is not very large as a function of $m$,  this sum will be at most $2^{O(n \log(n))}$.

\paragraph{Acknowledgement} 
We wish to thank Nathaniel Harms and Viktor Zamaraev for their feedback on an earlier draft of this paper. 

\section{Preliminary lemmas}

Denote by $B(n,m)$ for $0\leq m\leq n^2$, the set of all bipartite graphs with equal parts $\{1,\ldots, n\}$ and $\{n+1,\ldots, 2n\}$, and exactly $m$ edges.  Recall that a graph is said to be $k$-\emph{degenerate} if all its induced subgraphs have a vertex of  degree at most $k$.

\begin{lemma}
\label{lem:lemDeg}
Suppose $0\leq \epsilon'<\epsilon<1$ are fixed constants. Consider the random graph $G$ chosen uniformly at random from $B(n,m)$ with $m=\lfloor n^{2-\epsilon}\rfloor$. Then, for sufficiently large $n$, with probability $1-o(1)$, every induced subgraph of $G$ with at most $n^{\epsilon'}$ vertices is $(c-1)$-degenerate where $c=\frac{4}{\epsilon-\epsilon'}$. 
\end{lemma}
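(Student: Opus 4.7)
The plan is to reformulate the statement as a first-moment calculation and apply a direct union bound. Recall that a graph is $(c-1)$-degenerate if and only if no subgraph has minimum degree at least $c$, and this is unchanged if we restrict to induced subgraphs. Hence the conclusion of the lemma is equivalent to the single statement that no induced subgraph $G[T]$ with $|T| \le n^{\epsilon'}$ satisfies $\delta(G[T]) \ge c$, and I would bound the probability of its negation by union-bounding over $T$.

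First I would fix $|T| = k$ with $c+1 \le k \le n^{\epsilon'}$ and observe that $\delta(G[T])\ge c$ forces $|E(G[T])|\ge t := \lceil ck/2\rceil$ edges among at most $k^2/4$ potential bipartite edges inside $T$. A second union bound over which $t$-subset of edges is realized gives
\[
\Pr[\delta(G[T])\ge c]\;\le\;\binom{k^2/4}{t}\,p_t,
\]
where $p_t$ is the probability that a specified set of $t$ bipartite edges lies in $G$. For the uniform distribution on $B(n,m)$ with $m=\lfloor n^{2-\epsilon}\rfloor$, a direct computation $p_t = \prod_{i<t}(m-i)/(n^2-i)$ gives $p_t \le (2m/n^2)^t \le (2n^{-\epsilon})^t$, valid since $t \le cn^{\epsilon'}/2 \ll n^2$.

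Next I would carry out the union bound over $T$ using $\binom{2n}{k}\le (2n)^k$ and $\binom{k^2/4}{t}\le (ek/(2c))^t$, arriving at a bound of the form $\bigl[\,2(e/c)^{c/2}\cdot k^{c/2}\cdot n^{1-\epsilon c/2}\,\bigr]^k$ for the expected number of bad sets of size $k$. Replacing $k^{c/2}$ by $n^{\epsilon' c/2}$ via the assumption $k\le n^{\epsilon'}$, the $n$-exponent becomes $1-(\epsilon-\epsilon')c/2$. The punchline, and the only delicate calibration, is the choice $c=4/(\epsilon-\epsilon')$: it makes this exponent equal $-1$, so each term is $(C_c/n)^k$ for a constant $C_c$ depending only on $c$, and summing the geometric-type tail from $k=c+1$ to $n^{\epsilon'}$ gives $O(n^{-(c+1)}) = o(1)$.

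The only real obstacle is verifying this calibration: one must choose $c$ large enough so that the sparsity penalty $n^{-\epsilon ck/2}$ defeats both the vertex-set count $(2n)^k$ and the additional $n^{\epsilon' ck/2}$ factor coming from bounding $k^{ck/2}$, and the value $c=4/(\epsilon-\epsilon')$ is exactly what makes the exponent of $n$ inside the brackets strictly negative. No concentration inequality beyond the first-moment method is needed, since the expected number of bad witnesses is already $o(1)$.
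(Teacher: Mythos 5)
Your proof is correct, and it is the same first-moment/union-bound argument as the paper's, with the same key calibration $c = 4/(\epsilon - \epsilon')$ and the same probability estimate $\Pr[\text{a fixed set of $t$ edges all appear}] \le (m/n^2)^t \le n^{-\epsilon t}$. The one structural difference is in the choice of witness for the union bound. You pass from $\delta(G[T]) \ge c$ to the weaker consequence $|E(G[T])| \ge t := \lceil ck/2 \rceil$ and union-bound over all $t$-subsets of the at most $k^2/4$ potential edges inside $T$, paying $\binom{k^2/4}{t}$. The paper instead splits the bad set along the bipartition as $A \cup B$ with $|A| = a \le b = |B|$ and union-bounds over a choice of $c$ neighbors for each vertex on the larger side, paying only $\binom{a}{c}^b \le a^{bc}$; exploiting that $a$ is the smaller part yields a per-$(a,b)$ bound of $o(n^{-2})$. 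Your witness is coarser (it discards the bipartite asymmetry and uses only the edge count), which costs an extra polynomial factor in $k$, but this is harmlessly absorbed by the same choice of $c$, so both variants close. Two small points to tidy up: (i) a bipartite graph with minimum degree at least $c$ needs at least $c$ vertices on each side, so the relevant range is $k \ge 2c$ rather than $k \ge c+1$ (for $k < 2c$ the event is vacuously impossible, and you also want $t \le k^2/4$ for $\binom{k^2/4}{t}$ to be nontrivial, which again needs $k \ge 2c$); and (ii) since $t = \lceil ck/2\rceil$ may exceed $ck/2$, one should note that for $k \le n^{\epsilon'}$ and large $n$ the factor $(ek/(2c))\, n^{-\epsilon}$ is less than $1$, so rounding $t$ up only helps.
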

\begin{proof}
Let $F$ be an induced subgraph of $G$ with at most $n^{\epsilon'}$  vertices. Let $A$ and $B$ denote the set of the vertices of $F$ in the first part and the second part, respectively, and denote $a=|A|$ and $b=|B|$. We have $a,b\leq n^{\epsilon'}$.  We call $F$ \emph{bad} if its minimum degree  is at least $c$.

The probability that every vertex in $A$ has degree at least $c$ is bounded from above by
$$\frac{{b \choose c}^a {n^2-ac \choose m-ac}}{{n^2 \choose m}} \le {b \choose c}^a \left(\frac{m}{n^2}\right)^{ac}\le \left(\frac{mb}{n^2}\right)^{ac} =  \left(\frac{b}{n^{\epsilon}}\right)^{ac}. $$
Similarly the probability that every vertex in $B$ has degree at least $c$ is at most $\left(\frac{a}{n^{\epsilon}}\right)^{bc}$. Thus, by a union bound, the probability that a bad $F$ with parts of size $a$ and $b$, with $a<b\le n^{\epsilon'}$, exists is at most
$$
{n \choose a}{n \choose b} \left(\frac{a}{n^{\epsilon}}\right)^{bc} \leq n^{2b} \left(\frac{a}{n^{\epsilon}}\right)^{bc}  =  \left(n^2 \cdot \frac{a^c}{n^{c\epsilon}}\right)^{b}
\leq \left(\frac{n^2}{n^{c(\epsilon-\epsilon')}}\right)^b= o(n^{-2}), 
$$
where we used the assumption that  $c=\frac{4}{\epsilon-\epsilon'}$.  A similar bound holds for the case  $b<a\le n^{\epsilon'}$. Thus, by a union bound, since there are at most $n^2$ choices for $a$ and $b$, the probability that $G$  has a bad induced subgraph  with parts of size $a,b\leq n^{\epsilon'}$ is $o(1)$.  

It follows that with probability $1-o(1)$, every induced subgraph of $G$ with at most $n^{\epsilon'}$ vertices contains a vertex of degree less than $c$. Consequently, every induced subgraph of $G$ with $n^{\epsilon'}$ many vertices is $(c-1)$-degenerate. 
\end{proof}

\begin{lemma}\label{lemma:degencount}
The number of $c$-degenerate $n$-vertex graphs is at most $2^{O(cn\log n)}$. 
\end{lemma}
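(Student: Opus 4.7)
The plan is to exploit the standard degeneracy ordering. If $G$ is a $c$-degenerate graph on vertex set $[n]$, then by iteratively removing a vertex of minimum degree (which, by hypothesis applied to the current induced subgraph, has degree at most $c$) one obtains an elimination ordering $v_n, v_{n-1}, \ldots, v_1$. Reversing it yields an ordering $v_1, \ldots, v_n$ in which, for each $i$, the vertex $v_i$ has at most $c$ neighbors in $\{v_1, \ldots, v_{i-1}\}$. I will take this as the starting structural fact; it is the only graph-theoretic input needed.

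Next I would set up an encoding of the labeled graph $G$ by the pair consisting of (i) the permutation $\pi$ of $[n]$ giving the ordering, and (ii) for each $i \in [n]$, the set $N^-(v_i) \subseteq \{v_1,\ldots,v_{i-1}\}$ of back-neighbors of $v_i$, which has size at most $c$. Clearly this data determines $G$, so the number of $c$-degenerate graphs on $[n]$ is at most
\[
n!\cdot \prod_{i=1}^{n}\sum_{k=0}^{c}\binom{i-1}{k}.
\]
Bounding the inner sum by $(c+1)n^{c}$ and taking logarithms, this is at most
\[
\log(n!) + n\log\bigl((c+1)n^{c}\bigr)= O(n\log n)+O(cn\log n)= O(cn\log n),
\]
as desired (assuming $c \ge 1$, which is the only nontrivial case).

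There is really no obstacle here; the argument is a routine counting exercise once the elimination ordering is invoked. The only thing to be careful about is that the encoding may be many-to-one (different orderings could reconstruct the same $G$), but since we only need an upper bound on the number of distinct $c$-degenerate graphs, overcounting is harmless.
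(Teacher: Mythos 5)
Your argument is essentially identical to the paper's: both use the degeneracy elimination ordering to encode a $c$-degenerate graph by a permutation of $[n]$ together with, for each vertex, its at-most-$c$ back-neighbors, giving a bound of roughly $n!\cdot(n^c)^n = 2^{O(cn\log n)}$. The only difference is cosmetic (you bound the number of back-neighbor sets by $(c+1)n^c$ where the paper just writes $n^c$ inside an $O(\cdot)$), and your remark about harmless overcounting is correct.
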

\begin{proof}
By deleting the vertices of degree at most $c$ in turn, one obtains an ordering of the vertices   such that every vertex has at most $c$ neighbours in the subsequent vertices. Hence, the number of such graphs is at most
$$O(n! (n^c)^n)=2^{O(cn \log n)}. $$
\end{proof}

\section{Existence of counter-examples} 
We are now ready to prove \cref{thm:main}. Call an $n$-vertex bipartite graph \emph{good} if it is a positive instance of \cref{lem:lemDeg} for  $\epsilon=\frac{1}{2}+\frac{\delta}{2}$, and $\epsilon'=\frac{1}{2}$. Let $\cG$ be the family of all good graphs. Note that all good graphs are bipartite graphs with even number of vertices, and bipartition $\{1,\ldots, \frac{n}{2}\}$ and $\{\frac{n}{2}+1,\ldots, n\}$. We consider hereditary families that are constructed by picking small subsets $\cM_n\subseteq \cG_n$ and taking the hereditary closure of $\cM=\cup_{n\in \N} \cM_n$. To this end, we first prove that as long as $\cM_n$ are not too large, the resulting hereditary family will have at most factorial speed. 

\begin{claim}\label{claim:factorialspeed}
For every $n$, let $\cM_n\subseteq \cG_n$ be a subset with $|\cM_n|\leq 2^{\sqrt{n}}$. The hereditary closure of $\cup_{n\in \N} \cM_n$ has at most factorial speed.  
\end{claim}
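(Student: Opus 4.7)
The plan is to enumerate $n$-vertex graphs $F$ in the hereditary closure by identifying each such $F$ with a pair $(G, \sigma)$ where $G \in \cM_m$ for some $m \ge n$ and $\sigma$ picks out the vertex subset of $G$ whose induced subgraph is isomorphic to $F$. I will split the analysis at the threshold $m = n^2$, which is exactly the threshold at which \cref{lem:lemDeg} (applied with $\epsilon' = 1/2$) guarantees that every $n$-vertex induced subgraph of a good $G \in \cG_m$ is $(c-1)$-degenerate for the constant $c = 4/(\epsilon - \epsilon') = 8/\delta$.

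For the \emph{large} regime $m \ge n^2$, any $n$-vertex induced subgraph of $G \in \cM_m \subseteq \cG_m$ satisfies $n \le m^{1/2} = m^{\epsilon'}$, so by the definition of good it is $(c-1)$-degenerate. \cref{lemma:degencount} then bounds the number of such $F$'s by $2^{O(cn \log n)} = 2^{O(n\log n)}$, absorbing $c$ into the constant since $\delta$ is fixed. This handles every $m \ge n^2$ simultaneously, since the bound depends only on $n$.

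For the \emph{small} regime $n \le m < n^2$, I will bound crudely: there are $|\cM_m| \le 2^{\sqrt m} \le 2^{n}$ candidate graphs $G$, and at most $m^n \le n^{2n}$ injections from $[n]$ into $V(G)$ specifying the vertex subset, giving at most $2^n \cdot n^{2n}$ graphs $F$ for each fixed $m$. Summing over the at most $n^2$ values of $m$ in this range yields at most $n^2 \cdot 2^n \cdot n^{2n} = 2^{O(n\log n)}$ graphs.

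Combining the two regimes, the hereditary closure contains at most $2^{O(n\log n)}$ graphs on $n$ vertices, which is factorial speed. The only real subtlety is aligning the two cutoffs: the degeneracy bound from \cref{lem:lemDeg} only kicks in at $n \le m^{\epsilon'}$, and we must choose $\epsilon' = 1/2$ so that this threshold ($m \ge n^2$) is small enough that the complementary range $m < n^2$ is short enough for the crude $2^{\sqrt m}$ bound on $|\cM_m|$ to still fit under $2^{O(n \log n)}$. With $|\cM_m| \le 2^{\sqrt m}$ this is comfortable, so no further delicacy is needed.
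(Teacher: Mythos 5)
Your proof takes essentially the same route as the paper: split on the size $m$ of the originating graph, use \cref{lem:lemDeg} plus \cref{lemma:degencount} to count degenerate subgraphs in the large-$m$ regime, and do crude $m^n \cdot |\cM_m|$ counting in the small-$m$ regime. One small slip: in \cref{lem:lemDeg} the parameter $n$ is the size of \emph{one bipartite part}, so a good graph on $m$ vertices only guarantees $(c-1)$-degeneracy of induced subgraphs on at most $(m/2)^{\epsilon'} = \sqrt{m/2}$ vertices, not $m^{1/2}$. The correct cutoff is therefore $m \geq 2n^2$, not $m \geq n^2$. This is harmless — your small-regime sum simply runs up to $2n^2$ and $|\cM_m| \leq 2^{\sqrt{m}} \leq 2^{\sqrt{2}\,n}$ still yields $2^{O(n\log n)}$ — but worth flagging. (Also note you correctly compute $c = 8/\delta$; the paper's text has a typo writing $8\delta$, though it does not affect the argument since $\delta$ is fixed.)
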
 
\begin{proof}
Let $\cF$ denote the hereditary closure of $\cup_n \cM_n$. For an $n$-vertex graph $G\in \cF$, let $m$ be the smallest integer such that $G\in \cl(\cM_m)$. We consider two cases, based on whether $n \leq  \sqrt{m/2}$. If $n \leq \sqrt{m/2}$, then since the graphs in $\cM_m$ are good, $G$ is $c$-degenerate for $c=\frac{4}{\epsilon-\epsilon'} = 8 \delta$. By \cref{lemma:degencount}, the number of graphs $G$ of this type is bounded by $2^{O(n\log n)}$. The number of graphs $G$, with $n>\sqrt{m/2}$ is bounded by
\[
\sum_{m=n}^{2  n^2} m^n |\cM_m|= 
\sum_{m=n}^{2  n^2} m^n 2^{\sqrt{m}}
\le 
2 n^2 \cdot (2 n^2)^n \cdot 2^{\sqrt{2 n^2}} = 2^{O(n\log n)}. \qedhere
\]
\end{proof}

We will focus our attention to even $n$. Define $k_n=\lceil 2^{\sqrt{n}}\rceil$. We will show that there is an $\cM_n$ as in the assumption of \cref{claim:factorialspeed} that does not have a universal graph representation of size $u\leq 2^{n^{\frac{1}{2}-\delta}}$. Combined with \cref{claim:factorialspeed}, the closure of $\cup_{n\in \N} \cM_n$ is a hereditary family of at most factorial speed that does not have a  $2^{n^{\frac{1}{2}-\delta}}$-size universal graph representation. Equivalently there is no implicit representation of this family with codes of length at most $n^{\frac{1}{2}-\delta}$, refuting the Implicit Graph Conjecture. 

Suppose $U$ is a $u$-vertex graph. The number of $n$-vertex graphs that can be represented by $U$ is at most $u^n$. Thus the number of collections of  $k_n$ graphs on $n$-vertices that are simultaneously represented by $U$ is at most $u^{nk_n}$.

Since the number of distinct $u$-vertex graphs $U$ is at most $2^{u^2}$, the number of collections of  $k_n$ graphs on $n$-vertices that have a $u$-vertex universal graph is at most 
\begin{equation}
\label{eq:UniversalPack}
2^{u^2}\cdot u^{nk_n}.
\end{equation}

On the other hand, let us first estimate $|\cG_n|$. The number of graphs in the support of $B(\frac{n}{2}, \lfloor( \frac{n}{2})^{2-\epsilon}\rfloor)$ is 
$$
{\frac{n^2}{4} \choose  \lfloor(\frac{n}{2})^{2-\epsilon}\rfloor } \geq 2^{\Omega(n^{2-\epsilon} \log n)}. 
$$
Hence by \cref{lem:lemDeg}, $|\cG_n|$ is also at least $2^{\Omega(n^{2-\epsilon} \log n)}$. As a result, the number of choices of $\cM_n\subseteq \cG_n$ with $|\cM_n|=k_n$ is at least
$$ 2^{\Omega(k_n  n^{2-\epsilon} \log n)}.
$$

We finally observe that for sufficiently large $n$, this is larger than  (\ref{eq:UniversalPack}):
\begin{align*}
\log (2^{u^2}\cdot u^{nk_n}) &= u^2 + n k_n \log(u)\leq 2^{2n^{\frac{1}{2}-\delta}} + n \lceil2^{\sqrt{n}}\rceil n^{\frac{1}{2}-\delta}=O( 2^{\sqrt{n}} n^{\frac{3}{2}-\delta}),
\end{align*}
while 
\begin{align*} 
\log(2^{\Omega(k_n  n^{2-\epsilon} \log n)}) &=\Omega(k_n n^{2-\epsilon} \log n)=\Omega(2^{\sqrt{n}} n^{2-\epsilon} \log n)=\Omega(2^{\sqrt{n}} n^{\frac{3}{2}-\frac{\delta}{2}} \log n).
\end{align*}

Thus, there exist collections $\cM_n\subseteq \cG_n$ with $|\cM_n|\leq k_n$ such that $\cM_n$ cannot be represented by any universal graph of size $u$. This combined with \cref{claim:factorialspeed} concludes the proof of \cref{thm:main}.

\section{Concluding remarks}
The minimum code-length of an implicit representation for the family of all graphs is  $\frac{n}{2}+O(1)$~\cite{MR3613451, MR177911}.
\cref{thm:main} shows that there are factorial hereditary graph properties that require  code-lengths of roughly $\Omega(\sqrt{n})$. We do not know whether the square root appearing in \cref{thm:main} can be improved to a larger power, or even potentially to powers that are arbitrarily close to $1$. To this end, we ask whether there exists an $\epsilon>0$ such that every factorial hereditary property can be represented by codes of length $O(n^{1-\epsilon})$.

The refutation of the Implicit Graph Conjecture leaves the problem of determining whether there is a simple characterization of hereditary graph properties with efficient implicit representations wide open. A  problem of similar nature in the area of communication complexity is to characterize the hereditary classes of Boolean matrices that have randomized communication complexity $O(1)$. Indeed as it was shown in \cite{harms2021randomized}, a derandomization argument shows that every such class must admit an efficient implicit representation, and thus the two problems are closely related. We refer the reader to \cite{harms2021randomized}  for a thorough discussion of the connections to communication complexity. 

The observation that even polynomially large subgraphs of  the random graph $G(n,n^{2-\epsilon})$ are with high probability  $O(1)$-degenerate was first used in \cite{Hambardzumyan2021DimensionfreeBA} as a proof-barrier for  certain conjectures regarding randomized communication complexity. The connection between randomized communication complexity and the implicit graph conjecture was  pointed out  by~\cite{harms2021randomized}. The present paper combines the above-mentioned observation about sparse random graphs with  an information-theoretic counting argument to create a factorial hereditary property that does not admit an efficient implicit representation.

\bibliographystyle{alpha}
\bibliography{refs}

\end{document}